\documentclass{amsart}
\usepackage{amssymb}
\usepackage{graphicx}
\usepackage[shortlabels]{enumitem}
\usepackage{multirow}
\usepackage{enumitem}
\usepackage[T1]{fontenc}
\usepackage{amsmath,color}
\usepackage{hyperref}
\usepackage{url}
\usepackage{longtable}

\theoremstyle{plain}
\newtheorem{theorem}{Theorem}[section]

\newtheorem{proposition}[theorem]{Proposition}
\newtheorem{problem}[theorem]{Problem}
\newtheorem{lemma}[theorem]{Lemma}

\theoremstyle{definition}

\theoremstyle{remark}

{



\title{Hypercubes are determined by their distance spectra}

\author[J. H. Koolen]{ Jack H. Koolen$^\diamondsuit$}
\thanks{$^\diamondsuit$J.H.K. is partially supported by the National Natural Science Foundation of China (No. 11471009).}
\author[S. Hayat]{ Sakander Hayat$^\spadesuit$}
\thanks{$^\spadesuit$S.H is supported by a CAS-TWAS president's fellowship at USTC, China.}
\author[Q. Iqbal]{ Quaid Iqbal$^\clubsuit$}
\thanks{$^\clubsuit$Q.I is supported by a Chinese government fellowship at USTC, China.}


\address{Wen-Tsun Wu Key Laboratory of CAS, School of Mathematical Sciences,
University of Science and Technology of China,
Hefei, Anhui, 230026, P.R. China}
\email{koolen@ustc.edu.cn}

\address{School of Mathematical Sciences,
University of Science and Technology of China,
Hefei, Anhui, 230026, P.R. China}
\email{sakander1566@gmail.com}

\address{School of Mathematical Sciences,
University of Science and Technology of China,
Hefei, Anhui, 230026, P.R. China}
\email{quaid.iqbal@yahoo.com}

\begin{document}

\subjclass[2010]{05E30, 05C50}

\keywords{Cospectral graphs, Hypercubes, Distance spectra, Isometric embedding}

\maketitle


\begin{abstract}
We show that the $d$-cube is determined by the spectrum of its distance matrix.
\end{abstract}
\section{Introduction}
For undefined terms, see next section.
For integers $n\geq2$ and $d\geq2$, the \emph{Hamming graph} $H(d,n)$ has as vertex set, the $d$-tuples with elements
from $\{0,1,2,\ldots,n-1\}$, with two $d$-tuples are adjacent if and only if they differ only in one coordinate.
For a positive integer $d$, the $d$-\emph{cube} is the graph $H(d,2)$.
In this paper, we show that the $d$-cube is determined by its distance spectrum, see Theorem \ref{main-thm}.
Observe that the $d$-cube has exactly three distinct distance eigenvalues.\\

Note that the $d$-cube is not determined by its adjacency spectrum as the Hoffman graph \cite{H63} has the same
adjacency spectrum as the $4$-cube and hence the cartesian product of $(d-4)$-cube and the Hoffman graph has the
same adjacency spectrum as the $d$-cube (for $d\geq4$, where the $0$-cube is just $K_{1}$). This also implies that
the complement of the $d$-cube is not determined by its distance spectrum, if $d\geq4$.\\

There are quite a few papers on matrices of graphs with a few distinct eigenvalues. An important case
is, when the Seidel matrix of a graph has exactly two distinct eigenvalues is very much studied, see, for example \cite{S76}.
Van Dam and Haemers \cite{VH98} characterized the graphs whose Laplacian matrix has two distinct nonzero eigenvalues.
The non-regular graphs with three distinct adjacency eigenvalues studied in Bridges \& Mena \cite{BM81},
Muzychuk \& Klin \cite{MK98}, Van Dam \cite{V98}, and Cheng et al. \cite{CGGK15,CGK15}. In this paper, we focus
on connected graphs with three distinct distance eigenvalues.\\

Also the question whether a graph $\Gamma$ is determined by its spectrum of a matrix $M=M(\Gamma)$, where $M$
is the adjacency matrix, the Laplacian matrix, the signless Laplacian matrix and so on, has received much attention, see,
for example, Van Dam and Haemers \cite{VH06,VH09}.
The study of the distance spectrum of a connected graph has obtained a considerable attention in the last few years, see
Aouchiche \& Hansen \cite{AH014}, for a survey paper on this topic. The study of question whether a graph is determined
by its distance spectrum is just in its beginning. See McKay \cite{M77}, Lin et al. \cite{LHWS013}, and Jin \& Zhang \cite{JZ014} for
recent papers on this subject.\\

This paper is organized as follows. In Section \ref{sec2}, we will give definitions and preliminaries. In Section \ref{sec3},
we will give some old and new results on the second largest and smallest distance eigenvalue.
We also will give an alternative proof for the fact that the complete multipartite graphs are determined by
their distance spectrum, a result that was first shown by Jin \& Zhang \cite{JZ014}. In Section \ref{sec3.1}, we
will look at connected graphs with exactly three distinct distance eigenvalues
and develop some basic theory for them. Prop. \ref{prop1} is crucial for the proof of our main result. In
Section \ref{sec4}, we will give a proof of the fact that the $d$-cube is determined by its distance spectrum,
our main result. In Section \ref{sec5}, we will give some open problems.

\section{Preliminaries}\label{sec2}
All the graphs in this paper are simple and undirected. A \emph{graph} $\Gamma$ is a pair $(V(\Gamma),E(\Gamma))$,
where $V(\Gamma)$ is a finite set and $E(\Gamma)\subseteq{V(\Gamma)\choose2}$. The set $V(\Gamma)$ is the
vertex set of $\Gamma$ and $E(\Gamma)$ is the edge set of $\Gamma$. We define $n_{\Gamma}=\#V(\Gamma)$ and
$\epsilon_{\Gamma}=\#E(\Gamma)$. We denote the matrix of all-ones, the identity matrix and  the vector of all-ones
by $J$, $I$ and $\textbf{j}$ respectively.\\

Let $\Gamma$ be a connected graph. The \emph{distance} between two vertices $x$ and $y$,
$d_{\Gamma}(x,y)$, is defined as the length of the shortest walk between $x$ and $y$. Note that the space $(V(\Gamma),d_{\Gamma})$
is a \emph{metric space}. The \emph{diameter} of a graph $\Gamma$ is the maximum distance between any two vertices of $\Gamma$.
The \emph{distance matrix} $\mathcal{D}(\Gamma)$ is defined as the
symmetric $(n\times n)$-matrix indexed by the $V(\Gamma)$ such that $\mathcal{D}(\Gamma)_{xy}=d_{\Gamma}(x,y)$.
When it is clear from the context which graph $\Gamma$ we mean, we delete $\Gamma$ from the
notations like $n_{\Gamma}$, $\epsilon_{\Gamma}$, $d_{\Gamma}(x,y)$, and $\mathcal{D}(\Gamma)$. A
\emph{distance eigenvalue} of $\Gamma$ is an eigenvalue of its distance matrix. As the distance matrix
of a graph is a symmetric integral matrix, it has an orthonormal basis of eigenvectors corresponding to $n$ real eigenvalues.
In particular, the algebraic and geometric multiplicities of its eigenvalues coincide.
We denote these eigenvalues as $\delta_{1}\geq\delta_{2}\geq\ldots\geq\delta_{n}$, if we list them all, and as
$\delta_{0}^{(m_{0})}>\delta_{1}^{(m_{1})}>\ldots>\delta_{t}^{(m_{t})}$, if we list the distinct ones, where $m_i$ is the multiplicity
of $\delta_{i}$.\\

Let $\Gamma$ be a connected graph, with distinct distance eigenvalues $\delta_{0}>\ldots>\delta_{t}$.
By the Perron-Frobenius theorem (see Brouwer \& Haemers \cite[Theorem 2.2.1]{BH12}), we have $\delta_{0}\geq\mid\delta_{t}\mid$,
and $\delta_{0}$ has multiplicity 1. We call $\delta_{0}$ the \emph{distance spectral radius}.
The \emph{distance spectrum} of $\Gamma$ is the spectrum of its distance matrix.
The characteristic polynomial of $\mathcal{D}$ of a graph $\Gamma$ is
called its \emph{distance characteristic polynomial} denoted as $p_{\Gamma}(x)$. As
$\mathcal{D}$ only contains integral entries, the polynomial $p_{\Gamma}(x)$ is a monic polynomial
with integral coefficients. Let $\delta$ be an eigenvalue of $\mathcal{D}$, say with multiplicity $m$, and
let $q(x)$ be the minimal polynomial of $\delta$. It is well-known that $q(x)^{m}$ divides $p_{\Gamma}(x)$. It follows that
for any other root $\delta^{\prime}$ of $q(x)$ we have, that $\delta^{\prime}$ is an  eigenvalue of $\mathcal{D}$ with multiplicity
$m$. So, for example, if any other eigenvalue of $\mathcal{D}$ has multiplicity at least $2$, then $\delta_{0}$ is integral.\\

We say that a connected graph $\Omega$ is \emph{distance cospectral} with $\Gamma$ if the two distance spectra are the
same, and that a connected graph $\Gamma$ is \emph{determined by its distance spectrum} if
there does not exist a non-isomorphic graph $\Omega$ that is distance cospectral with $\Gamma$.\\

In this paper, by interlacing we always mean the following `interlacing theorem'.
\begin{lemma}\emph{(\cite[Theorem 9.3.3]{gr01})}\label{lem:interlacing}
Let $A$ be a Hermitian matrix of order $n$, and let $B$ be a principal submatrix of $A$
of order $m$. If $\theta_{1}(A)\geq\theta_{2}(A)\geq\ldots\geq\theta_{n}(A)$ lists the eigenvalues of $A$
and $\mu_{1}(B)\geq\mu_{2}(B)\geq\ldots\geq\mu_{m}(B)$ the eigenvalues of $B$, then
$\theta_{n-m+i}(A)\leq\mu_{i}(B)\leq\theta_{i}(A)$ for $i=1,2,\ldots,m$.
\end{lemma}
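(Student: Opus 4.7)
The plan is to deduce both inequalities from the Courant--Fischer min--max characterization of the eigenvalues of a Hermitian matrix, which is the standard tool for ``subspace restriction'' interlacing statements of this type. Recall that for a Hermitian matrix $M$ of order $k$ with eigenvalues $\lambda_{1}(M)\geq\cdots\geq\lambda_{k}(M)$, one has
$$\lambda_{i}(M) \;=\; \max_{\substack{S\subseteq\mathbb{C}^{k}\\ \dim S=i}} \; \min_{0\neq x\in S}\frac{x^{*}Mx}{x^{*}x} \;=\; \min_{\substack{S\subseteq\mathbb{C}^{k}\\ \dim S=k-i+1}} \; \max_{0\neq x\in S}\frac{x^{*}Mx}{x^{*}x}.$$

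First I would reduce to the case where $B$ is the leading $m\times m$ principal submatrix of $A$. Since $B$ arises by restricting $A$ to some $m$-element index set $I\subseteq\{1,\ldots,n\}$, conjugating $A$ by a suitable permutation matrix moves $I$ into the first $m$ positions while changing neither the spectrum of $A$ nor that of $B$. After this reduction, $B=P^{*}AP$, where $P$ is the $n\times m$ matrix whose columns are $e_{1},\ldots,e_{m}$; the map $x\mapsto Px$ is then an isometric linear embedding $\mathbb{C}^{m}\hookrightarrow\mathbb{C}^{n}$ satisfying $(Px)^{*}A(Px)=x^{*}Bx$ and $(Px)^{*}(Px)=x^{*}x$, so Rayleigh quotients for $B$ lift verbatim to Rayleigh quotients for $A$ along image subspaces.

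For the upper bound $\mu_{i}(B)\leq\theta_{i}(A)$, I would take an $i$-dimensional subspace $S\subseteq\mathbb{C}^{m}$ realizing the maximum in the Courant--Fischer formula for $\mu_{i}(B)$. Its image $\widetilde{S}:=P(S)$ is an $i$-dimensional subspace of $\mathbb{C}^{n}$ on which the Rayleigh quotient of $A$ is bounded below by $\mu_{i}(B)$, so maximizing over all $i$-dimensional subspaces of $\mathbb{C}^{n}$ gives $\theta_{i}(A)\geq\mu_{i}(B)$. For the lower bound $\theta_{n-m+i}(A)\leq\mu_{i}(B)$, the cleanest route is to apply the previous paragraph to the Hermitian pair $(-A,-B)$, whose eigenvalues are the negatives of those of $(A,B)$ listed in reverse order, and then re-index; alternatively, one runs the same embedding argument using the second (min--max) Courant--Fischer formula with subspaces of codimension $m-i+1$.

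The only real obstacle is index bookkeeping, namely verifying that an $i$-dimensional subspace of $\mathbb{C}^{m}$ embeds as an $i$-dimensional subspace of $\mathbb{C}^{n}$ (immediate from injectivity of $P$) and that the shift by $n-m$ in the lower bound drops out correctly when one passes to $-A$. Once these are checked, no further work is needed; this is essentially the standard Cauchy--Poincar\'{e} interlacing argument, and the whole proof is little more than a transcription of Courant--Fischer through an isometric inclusion.
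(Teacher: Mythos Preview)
Your argument is correct and is exactly the standard Courant--Fischer proof of Cauchy interlacing; the reduction to a leading principal submatrix via a permutation, the isometric embedding $x\mapsto Px$, and the passage to $(-A,-B)$ for the lower bound are all fine, and the index bookkeeping works out as you say.

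There is nothing to compare against in the paper itself: the lemma is stated with a citation to \cite[Theorem 9.3.3]{gr01} and no proof is given, so your write-up supplies what the paper simply quotes from the literature.
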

We refer the reader to Brouwer \& Haemers \cite[Section 2.3]{BH12}, for
the necessary background on equitable partitions of real symmetric matrices and Godsil \& Royle
\cite[Chapter 9]{gr01} for background on interlacing.\\

Suppose $M$ is a symmetric real matrix whose rows and columns are indexed by
$X =\{1,\ldots,n\}$. Let $\{X_1,\ldots,X_m\}$ be a partition of $X$. The \emph{characteristic matrix}
$P$ is the $n\times m$ matrix whose $j$-th column is the characteristic vector of $X_j$
$(j=1,\ldots,m)$. Define $n_i=\mid X_i\mid$ and $K=\mathtt{diag}(n_1,\ldots,n_m)$. Let $M$ be partitioned according
to $\{X_1,\ldots,X_m\}$, that is,
$$M=\left(
              \begin{array}{ccc}
                M_{1,1} & \ldots & M_{1,m}\\
                \vdots & \ddots & \vdots\\
                M_{m,1} & \ldots & M_{m,m}\\
              \end{array}
           \right),$$
where $M_{i,j}$ denotes the submatrix (block) of $M$ whose rows (resp. columns) are indexed to be the
elements in $X_i$ (resp. $X_{j}$).
Let $q_{i,j}$ denote the average row sum of $M_{i,j}$. Then the matrix $Q=(q_{i,j})$ is
called the \emph{quotient matrix} of $M$ with respect to the given partition. We have
\begin{equation*}
KQ=P^{\top}AP,\hspace{0.2cm}P^{\top}P=K.
\end{equation*}
If the row sum of each block $M_{i,j}$ is constant then the partition is called \emph{equitable}
with respect to $M$,
and we have $M_{i,j}\textbf{j}=b_{i,j}\textbf{j}$ for $i,j=0,\ldots,d$, so
\begin{equation*}
AP=PQ.
\end{equation*}
The following lemma gives a result on equitable partitions of real symmetric matrices.
The proof is a straightforward modification of the proof of \cite[Theorem 9.1.1]{gr01}.
\begin{lemma}\emph{(cf. \cite[Theorem 9.1.1]{gr01})}
Let $M$ be a real symmetric matrix and let $\pi$ be an equitable partition with respect to $M$ with quotient matrix $Q$.
Then the characteristic polynomial of the quotient matrix $Q$ divides the characteristic polynomial of $M$.
\end{lemma}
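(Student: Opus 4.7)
The plan is to leverage the identity $MP = PQ$, which the excerpt has already established as a consequence of the equitableness hypothesis, and to turn the column span of $P$ into an orthogonal direct-summand decomposition of $\mathbb{R}^n$ on which $M$ acts blockwise. First I would rescale $P$ to make its columns orthonormal: set $\tilde P = PK^{-1/2}$, so that $\tilde P^{\top}\tilde P = K^{-1/2} P^{\top} P K^{-1/2} = I_m$. Substituting into $MP=PQ$ and isolating $\tilde Q := K^{1/2} Q K^{-1/2}$ gives $M\tilde P = \tilde P \tilde Q$. Since $\tilde Q$ is conjugate to $Q$ via the invertible diagonal matrix $K^{1/2}$, the two matrices share the same characteristic polynomial, so it suffices to show that $p_{\tilde Q}(x)$ divides $p_M(x)$.

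Next I would extend $\tilde P$ to an $n\times n$ orthogonal matrix $U = [\tilde P \mid R]$ by appending an orthonormal basis of the orthogonal complement of the column span of $\tilde P$. The relation $M\tilde P = \tilde P \tilde Q$ tells us both that the column span of $\tilde P$ is $M$-invariant and, because $M$ is symmetric, that its orthogonal complement (the column span of $R$) is also $M$-invariant. Consequently $R^{\top}M\tilde P = 0$ and $\tilde P^{\top}M R = 0$, so the similarity $U^{\top} M U$ takes the block-diagonal form
\[
U^{\top} M U = \begin{pmatrix} \tilde Q & 0 \\ 0 & R^{\top} M R \end{pmatrix}.
\]
Taking characteristic polynomials, $p_M(x) = p_{\tilde Q}(x)\,p_{R^{\top}MR}(x) = p_Q(x)\,p_{R^{\top}MR}(x)$, which exhibits $p_Q(x)$ as a divisor of $p_M(x)$.

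The one step that requires a little care is verifying that the column space of $\tilde P$ is genuinely $M$-invariant and of full dimension $m$; this is where the equitable hypothesis, and hence the identity $M\tilde P = \tilde P\tilde Q$, is used essentially, together with the fact that the characteristic vectors of the parts $X_1,\dots,X_m$ are linearly independent (so $P$, and therefore $\tilde P$, has rank $m$). Everything else is routine linear algebra about symmetric matrices and invariant subspaces, so I do not anticipate a serious obstacle.
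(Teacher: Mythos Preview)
Your argument is correct. The paper does not actually supply a proof of this lemma; it merely remarks that the proof is a straightforward modification of \cite[Theorem~9.1.1]{gr01} and states the result. What you have written is precisely such a fleshing-out: the identity $MP=PQ$ shows that the column span of $P$ is $M$-invariant, and from there one extracts $Q$ (up to similarity) as a diagonal block of a matrix similar to $M$.

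One small remark on the comparison: the Godsil--Royle argument does not require $M$ to be symmetric. Extending a basis of $\mathrm{col}(P)$ arbitrarily already puts $M$ in block \emph{upper-triangular} form with top-left block similar to $Q$, which is enough for the divisibility of characteristic polynomials. Your use of the orthonormalisation $\tilde P = PK^{-1/2}$ and the symmetry of $M$ upgrades this to a block-\emph{diagonal} form, which is pleasant but not needed for the lemma as stated. Either way the conclusion follows, and your verification that $P$ has full column rank $m$ (since the characteristic vectors of the parts are linearly independent) is the only point where any care is required.
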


\section{Bounds on distance eigenvalues}\label{sec3}
This section is devoted to the study of the distance spectrum of complete multipartite graphs.
Item (2) of the following proposition was shown by Lin et al. \cite[Theorem 2.3 and Remark 2.4]{LHWS013}.
Item (3) is a slight extension of Lin et al. \cite[Theorem 2.6]{LHWS013}.
We give a full proof of the proposition for the convenience of the reader.
\begin{proposition}\label{prop3}
Let $\Gamma$ be a connected graph with $n\geq2$ vertices, distance matrix $\mathcal{D}$ and diameter $D$.
Let $\delta_{1}$ and $\delta_{\min}$ be respectively the second largest and smallest distance eigenvalues,
with respective multiplicities $m_{1}$ and $m_{\min}$. Then the following all hold.
\begin{itemize}
 \item[\emph{(1)}] $-\delta_{\min}\geq D$ holds and equality can only occur if $-\delta_{\min}\in\{1,2\}$.\label{prp31}
 \item[\emph{(2)}] $\delta_{\min}>-2$ if and only if $\delta_{\min}=-1$ if and only if $\Gamma$ is the complete graph $K_{n}$.\label{prp32}
 \item[\emph{(3)}] $\delta_{\min}=-2$ if and only if $\Gamma$ is a complete $s$-partite graph $K_{a_1,a_2,\ldots,a_{s}}$,
where $1\leq a_{1}\leq\ldots\leq a_{s}$ are integers such that $a_{s}\geq2$ and $s=n-m_{\min}$.\label{prp33}
\item[\emph{(4)}] We have $n-1\leq\delta_{0}\leq D(n-1)$ and $\delta_{0}=D(n-1)$ if and only if $D=1$.
\item[\emph{(5)}] The following statements are equivalent;
\begin{itemize}
\item[\emph{(i)}] $\delta_{1}<1-\sqrt{3}$;
\item[\emph{(ii)}] $\delta_{1}=-1$;
\item[\emph{(iii)}] $\Gamma$ is the complete graph $K_{n}$;
\item[\emph{(iv)}] $\delta_{0}=n-1$.
\end{itemize}\label{prp34}
\end{itemize}
\end{proposition}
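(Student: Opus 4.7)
My plan is to dispatch the five items with two tools: interlacing via small principal submatrices of $\mathcal{D}$, and a Rayleigh-quotient estimate with the all-ones vector. Most of the work goes into identifying the right $2 \times 2$ or $3 \times 3$ submatrix in each case.

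For (1), pick $x, y$ with $d(x,y) = D$; the $2 \times 2$ principal submatrix has eigenvalues $\pm D$, so interlacing gives $\delta_{\min} \leq -D$. To rule out equality when $D \geq 3$, I take a geodesic $v_0, v_1, \ldots, v_D$ and form the $3 \times 3$ principal submatrix on $\{v_0, v_1, v_D\}$; a direct evaluation of its (trace-zero) characteristic polynomial at $x = -D$ produces the value $D(D-2)^2$, which is strictly positive for $D \geq 3$ and forces the smallest eigenvalue of the submatrix to be strictly less than $-D$, contradicting equality by interlacing. Item (2) then follows: $\delta_{\min} > -2$ combined with (1) forces $D = 1$, so $\Gamma = K_n$, whose distance matrix $J - I$ has eigenvalues $n-1$ (simple) and $-1$ (with multiplicity $n-1$). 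For (4), the upper bound $\delta_0 \leq D(n-1)$ is the row-sum bound on the Perron eigenvalue of a nonnegative matrix, and equality forces every off-diagonal distance to equal $D$, which is only consistent with $D = 1$; the lower bound comes from the Rayleigh quotient at $\mathbf{j}$, giving $\delta_0 \geq \mathbf{j}^{\top}\mathcal{D}\mathbf{j}/n = (\text{average row sum}) \geq n-1$.

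For (3), I first handle the forward direction for $\Gamma = K_{a_1, \ldots, a_s}$ with $a_s \geq 2$. Writing $\mathcal{D} = J + J_B - 2I$, where $J_B$ is the block-diagonal matrix of all-ones blocks on the parts, both $J$ and $J_B$ are positive semidefinite, so $\mathcal{D} + 2I$ is PSD and $\delta_{\min} \geq -2$; combined with $\delta_{\min} \leq -D = -2$ from (1), equality holds. The range of $J + J_B$ is spanned by the $s$ characteristic vectors of the parts, so $\mathrm{rank}(J + J_B) = s$ and $m_{\min} = n-s$. For the converse, $\delta_{\min} = -2$ together with (1) and (2) forces $D = 2$. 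If $\Gamma$ is not complete multipartite---equivalently, non-adjacency fails transitivity---there exist vertices $u, v, w$ with $uv, vw$ non-edges and $uw$ an edge; since $D = 2$, the $3 \times 3$ principal submatrix on them is $\bigl(\begin{smallmatrix}0 & 2 & 1\\ 2 & 0 & 2\\ 1 & 2 & 0\end{smallmatrix}\bigr)$, whose characteristic polynomial $(x+1)(x^2 - x - 8)$ has smallest root $(1-\sqrt{33})/2 < -2$, contradicting $\delta_{\min} = -2$ by interlacing.

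For (5), I close the loop. The implications (iii)$\Rightarrow$(ii), (ii)$\Rightarrow$(i), and (iii)$\Rightarrow$(iv) are one-line consequences of the spectrum of $K_n$ and $-1 < 1 - \sqrt{3}$. For (iv)$\Rightarrow$(iii), tracing equality in the Rayleigh-quotient argument in (4) forces the average row sum to equal $n-1$ and hence every off-diagonal distance to be $1$, so $\Gamma = K_n$. The heart of (5) is (i)$\Rightarrow$(iii): if $\Gamma$ is connected and not $K_n$, then a shortest path between any two non-adjacent vertices has its first three vertices forming an induced $P_3$ with endpoints at distance $2$ (the shortest-path property rules out the closing chord), and the $3 \times 3$ submatrix $\bigl(\begin{smallmatrix}0 & 1 & 2\\ 1 & 0 & 1\\ 2 & 1 & 0\end{smallmatrix}\bigr)$ has eigenvalues $1+\sqrt{3}, -2, 1-\sqrt{3}$; interlacing then yields $\delta_1 \geq 1-\sqrt{3}$, contradicting (i). The main obstacle I anticipate is keeping the small characteristic-polynomial evaluations clean (especially the $D(D-2)^2$ identity in (1)) and being careful about which ordered eigenvalue of each submatrix interlaces which eigenvalue of $\mathcal{D}$.
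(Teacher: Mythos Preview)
Your proof is correct and follows the same overall strategy as the paper: interlacing against small principal submatrices of $\mathcal{D}$ for items (1)--(3) and (5), and Perron--Frobenius/Rayleigh estimates for (4). The $3\times 3$ submatrix you use in (1) is exactly the paper's (your $D(D-2)^2$ evaluation matches), and (2), (4), (5) are argued essentially identically.

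The one place you diverge is (3). For the direction ``$\delta_{\min}=-2\Rightarrow$ complete multipartite'', the paper first excludes the pentagon and the paw (triangle with a pendant) as isometric induced subgraphs and then argues that ``distance $0$ or $2$'' is an equivalence relation; you instead take the failure-of-transitivity triple $u,v,w$ directly and interlace against the single $3\times3$ block $\bigl(\begin{smallmatrix}0&2&1\\2&0&2\\1&2&0\end{smallmatrix}\bigr)$, which is shorter and avoids having to check that those induced subgraphs are isometric in a diameter-$2$ graph. For the direction ``complete multipartite $\Rightarrow\delta_{\min}=-2$ and $m_{\min}=n-s$'', the paper works with the quotient matrix $B$ of the colour-class partition and shows $B+2I$ is positive definite by induction; your decomposition $\mathcal{D}+2I=J+J_B$ with both summands PSD gives $\delta_{\min}\geq -2$ and $\operatorname{rank}(\mathcal{D}+2I)=s$ in one stroke. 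Both routes are valid; yours is a bit more transparent, while the paper's quotient-matrix viewpoint is what they reuse later in Theorem~\ref{com-multi-DDS}.
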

\begin{proof}
Let $x,y$ and $z$ be the distinct vertices of $\Gamma$, such that $d(x,y)=1$, $d(y,z)=t-1$, and $d(x,z)=t$, and $t\geq2$.
The principal submatrix of $\mathcal{D}$ induced by $\{x,y,z\}$ equals
          $\left(
              \begin{array}{ccc}
                0 & 1 & t \\
                1 & 0 & t-1\\
                t & t-1 & 0\\
              \end{array}
            \right).$
This matrix has smallest eigenvalue at most $-t$ and equality can only hold if $t=2$. This shows (1) by interlacing.

For (2), we obtain that if $\delta_{\min}>-2$, then $D<2$ holds by (1), and hence $\Gamma$ is complete.
This shows (2).

For (3), we find that if $\delta_{\min}=-2$, then $D=2$ holds by (1) and (2). The pentagon and the triangle with a pendant edge
have smallest distance eigenvalue less than $-2$, so by interlacing they can not be induced subgraphs of $\Gamma$. It follows that,
to be at distance $2$ or $0$ is an equivalence relation, which implies that $\Gamma$ is complete multipartite.

Now, let $\Gamma=K_{a_1,a_2,\ldots,a_{s}}$, for some integers $a_{i}\geq1$ and $s\geq1$.
Let $\pi=\{V_{i}\mid i=1,2,\ldots,s\}$ be the color classes of $\Gamma$ such that $a_{i}=\#V_{i}$. Then $\pi$ is equitable
partition with respect to the distance matrix of $\Gamma$ with the quotient matrix
$$B=\left(
              \begin{array}{cccc}
                2a_{1}-2 & a_{2} & \ldots & a_{s}\\
                a_{1} & 2a_{2}-2 &  \ldots & a_{s}\\
                a_{1} &  a_{2} & \ldots & a_{s}\\
                \vdots & \vdots & \ddots & \vdots\\
                a_{1} & a_{2} & \ldots & 2a_{s}-2\\
              \end{array}
           \right).$$
It follows that $\det{(B+2I)}=a_1a_2\ldots a_{s}$. By induction on $s$, it is easy to show that $B+2I$
has only positive eigenvalues, so $\delta_{\min}=-2$. It also follows that the rank of $B+2I$ is equal to $s$.
Let $\chi_{i}$ be the characteristic vector of $V_{i}$ for $i=1,2,\ldots,s$. This means that all the eigenvectors
of $\mathcal{D}_{\Gamma}$ with respect to $-2$ are orthogonal to $\chi_{i}$ for all $i=1,2,\ldots,s$.
This means that $-2$, as a distance eigenvalue, has the same multiplicity as $0$ for the adjacency matrix. This shows that
$s=n-m_{\min}$. This shows (3).

For (4), let $\mathbf{x}$ be the Perron-Frobenius eigenvector for $\mathcal{D}$, then $(J-I)\mathbf{x}\leq\mathcal{D}\mathbf{x}\leq D(J-I)\mathbf{x}$,
and $(J-I)\mathbf{x}=\mathcal{D}\mathbf{x}$ if and only if $D=1$ if and only if $D(J-I)\mathbf{x}=\mathcal{D}\mathbf{x}$.
This shows (4).

For (5), note that the second largest distance eigenvalue of $K_{1,2}$ is equal to $1-\sqrt{3}$.
So this means that if $\delta_{1}<1-\sqrt{3}$, the graph is complete. Using (4), we see that (5) holds.
\end{proof}

Now we give the following result due to Jin \& Zhang \cite{JZ014}. We present an alternative proof for it.
\begin{theorem}\label{com-multi-DDS}
Let $\Gamma=K_{a_1,a_2\ldots,a_s}$ be the complete $s$-partite graph with $\sum\limits_{i=1}^sa_{i}=n$. Then $\Gamma$ is determined
by its distance spectrum.
\end{theorem}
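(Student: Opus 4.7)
The plan is to reduce the problem via Proposition \ref{prop3} to a question about a small quotient matrix, and then to recover the part sizes from its characteristic polynomial by a short computation. Let $\Omega$ be any connected graph distance cospectral with $\Gamma = K_{a_1, \dots, a_s}$, and assume $1 \leq a_1 \leq \cdots \leq a_s$. If $a_s = 1$, then $\Gamma = K_n$ has $\delta_{\min} = -1$, so Proposition \ref{prop3}(2) immediately gives $\Omega = K_n = \Gamma$. Otherwise $a_s \geq 2$ and $\delta_{\min}(\Gamma) = -2$ has multiplicity $n - s$ by Proposition \ref{prop3}(3); since cospectrality preserves both the smallest eigenvalue and its multiplicity, the same proposition forces $\Omega = K_{b_1, \dots, b_s}$ for integers $1 \leq b_1 \leq \cdots \leq b_s$ with $b_s \geq 2$. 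The task then reduces to showing that the multisets $\{a_i\}$ and $\{b_i\}$ coincide.

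The equitable color-class partition used in the proof of Proposition \ref{prop3}(3) realizes the $s$ distance eigenvalues of $\Gamma$ different from $-2$ as the eigenvalues of the $s \times s$ quotient matrix
\[
B(\mathbf{a}) \;=\; \mathbf{j}\mathbf{a}^{\top} + \mathrm{diag}(\mathbf{a}) - 2I, \qquad \mathbf{a} = (a_1, \dots, a_s)^{\top}.
\]
Cospectrality of $\Gamma$ and $\Omega$ thus yields $p_{B(\mathbf{a})} = p_{B(\mathbf{b})}$, so it suffices to prove that the characteristic polynomial of $B(\mathbf{a})$ determines $\{a_1, \dots, a_s\}$ as a multiset.

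The crux is to obtain a clean formula for $p_{B(\mathbf{a})}$ in terms of $f(\mu) := \prod_i (\mu - a_i)$, where $\mu = \lambda + 2$. Applying the matrix determinant lemma to $\det((\lambda+2)I - \mathrm{diag}(\mathbf{a}) - \mathbf{j}\mathbf{a}^{\top})$, and then using the polynomial identity $\sum_i a_i \prod_{j \neq i}(\mu - a_j) = \mu f'(\mu) - s\, f(\mu)$ (obtained by writing $a_i = \mu - (\mu - a_i)$), I expect the rational expression to collapse to
\[
p_{B(\mathbf{a})}(\lambda) \;=\; (s+1)\, f(\mu) - \mu\, f'(\mu).
\]
The linear operator $(s+1) - \mu \tfrac{d}{d\mu}$ acts on $\mu^k$ as multiplication by $s + 1 - k$, which is nonzero for each $k \in \{0, 1, \dots, s\}$. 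Hence the coefficients of $f$ can be read off one by one from those of $p_{B(\mathbf{a})}$, so $f$, and therefore the multiset $\{a_1, \dots, a_s\}$, is uniquely determined. Consequently $\{a_i\} = \{b_i\}$ as multisets and $\Omega \cong \Gamma$.

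The main obstacle I anticipate is establishing this closed-form identity: the matrix determinant lemma initially produces the rational expression $f(\mu)\bigl(1 - \sum_i a_i/(\mu - a_i)\bigr)$, and the essential manipulation is to convert it into a polynomial identity involving $f$ and $f'$ that is transparently invertible. Once that identity is in place, the spectral recovery of $f$, and hence of the part sizes, is automatic.
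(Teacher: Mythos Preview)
Your proposal is correct and follows the same overall strategy as the paper: reduce via Proposition~\ref{prop3} to the statement that two complete $s$-partite graphs with the same distance spectrum must have the same multiset of part sizes, and then extract the part sizes from the spectrum of the $s\times s$ quotient matrix on color classes.

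The difference lies only in how the last step is executed. The paper works with the shifted matrix $B'=B+2I$ and asserts, by a symmetry/degree argument, that the coefficient of $x^{i}$ in $\chi(B')$ has the form $\gamma_i\, e_{s-i}(a_1,\dots,a_s)$ with $\gamma_i$ independent of the $a_j$'s; the non-vanishing of each $\gamma_i$ is then justified somewhat indirectly by the fact that $B'$ has only positive eigenvalues. Your route, via the matrix determinant lemma and the identity $\sum_i a_i/(\mu-a_i)=\mu f'(\mu)/f(\mu)-s$, produces the explicit closed form $p_{B(\mathbf{a})}(\lambda)=(s+1)f(\mu)-\mu f'(\mu)$, from which the diagonal action $\mu^k\mapsto (s+1-k)\mu^k$ makes both the structure of the coefficients and the invertibility completely transparent (and in fact identifies the paper's constants as $\gamma_i=(-1)^{s-i}(s+1-i)$). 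You also handle the degenerate case $a_s=1$ (i.e.\ $\Gamma=K_n$) explicitly, which the paper's proof glosses over. So your argument is essentially the paper's, but with a sharper and more self-contained computation at the decisive step.
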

\begin{proof}
Let $\delta_{1}\geq\delta_{2}\geq\ldots\geq\delta_{n}$ be the distance eigenvalues of $K_{a_1,a_2\ldots,a_s}$
with $a_1\geq a_2\geq\ldots\geq a_{s}\geq1$. Let $\Omega$ be the distance cospectral graph with $\Gamma$.
Then $\Omega$ has smallest distance eigenvalue $-2$ with multiplicity $n-s$. So by part $(3)$ of Prop. \ref{prop3},
the graph $\Omega$ is a $K_{b_1,b_2\ldots,b_s}$ with $\sum\limits_{i=1}^sb_{i}=n$ and $b_1\geq b_2\geq\ldots\geq b_{s}\geq1$.
We need to show that $b_{i}=a_{i}$ for $i=1,2,\ldots,s$.

Note that $\delta_{1}+2,\ldots,\delta_{s}+2$ are the eigenvalues of the both matrices\\
$B^{\prime}=\left(
              \begin{array}{cccc}
                2a_{1} & a_{2} & \ldots & a_{s}\\
                a_{1} & 2a_{2} &  \ldots & a_{s}\\
                a_{1} &  a_{2} & \ldots & a_{s}\\
                \vdots & \vdots & \ddots & \vdots\\
                a_{1} & a_{2} & \ldots & 2a_{s}\\
              \end{array}
\right)$
and
$C^{\prime}=\left(
              \begin{array}{cccc}
                2b_{1} & b_{2} & \ldots & b_{s}\\
                b_{1} & 2b_{2} &  \ldots & b_{s}\\
                b_{1} &  b_{2} & \ldots & b_{s}\\
                \vdots & \vdots & \ddots & \vdots\\
                b_{1} & b_{2} & \ldots & 2b_{s}\\
              \end{array}
\right).$
Consider the characteristic polynomial $\chi(B^{\prime})$ of $B^{\prime}$ and let
$c_{i}$ be the coefficient of $x^{i}$ in $\chi(B^{\prime})$.
Then $c_{i}=\gamma_{i}\sum\limits_{\sigma\in Sym(n)}\prod\limits_{j=1}^{s-i}a_{\sigma(j)}$
for some $\gamma_{i}$ not depending on the $a_{j}$'s and $\sigma\in Sym(n)$.
Note that $\gamma_{i}\neq0$, as $B^{\prime}$ (and $C^{\prime}$) have only positive eigenvalues, and the $\gamma_{i}$'s
do not depend on the $a_{i}$'s. That means that for all $i=1,2,\ldots,s$,
\begin{equation*}
\sum\limits_{\sigma\in Sym(n)}\prod_{j=1}^{i}a_{\sigma(j)}=\sum\limits_{\sigma\in Sym(n)}\prod_{j=1}^{i}b_{\sigma(j)}.
\end{equation*}
This implies that $a_{i}=b_{i}$ for all $i$ as $\prod\limits_{i=1}^{s}(x-a_{i})=\prod\limits_{i=1}^{s}(x-b_{i})$.
This shows the theorem.
\end{proof}

\section{Graphs with Three Distance Eigenvalues}\label{sec3.1}
An important property of connected graphs with three distance eigenvalues is that
$(\mathcal{D}-\delta_{1}I)(\mathcal{D}-\delta_{2}I)$ is a rank one matrix.
Since the largest eigenvalue $\delta_{0}$ is simple, by the Perron-Frobenius Theorem, it follows that we have
\begin{equation}\label{eq2}
(\mathcal{D}-\delta_{1}I)(\mathcal{D}-\delta_{2}I)=\mathbf{\alpha}\mathbf{\alpha}^{\top},
\hspace{0.5cm}\textrm{with}\hspace{0.3cm}\mathcal{D}\mathbf{\alpha}=\delta_{0}\mathbf{\alpha}
\end{equation}

\begin{equation}\label{eq2*}
\mathcal{D}^{2}=\alpha\alpha^{\top}+(\delta_{1}+\delta_{2})\mathcal{D}+\delta_{1}\delta_{2}I.
\end{equation}

The following result was shown by Lin et al. \cite{LHWS013} for diameter two. We prove that the result is true for any diameter.
\begin{lemma}\label{com.bip}
Let $\Gamma$ be an $n$-vertex connected graph with exactly three distinct distance eigenvalues, say $\delta_{0}>\delta_{1}>\delta_{2}$.
If $\delta_{1}$ or $\delta_{2}$ is simple, then $\Gamma$ is complete bipartite. In particular if $\delta_{0}\notin\mathbb{Z}$, then
$\Gamma$ is complete bipartite.
\end{lemma}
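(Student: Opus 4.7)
\noindent The plan is to split into cases depending on which of $\delta_1,\delta_2$ is simple; since $\delta_0$ is always simple by Perron--Frobenius, the remaining eigenvalue in $\{\delta_1,\delta_2\}$ must then have multiplicity $n-2$. The case $n=3$ is immediate (the only connected $3$-vertex graph with three distinct distance eigenvalues is $K_{1,2}$, already complete bipartite), so I assume $n\geq 4$. The ``in particular'' clause will be a direct consequence: if $\delta_0\notin\mathbb{Z}$, then by the remark recalled in Section \ref{sec2} (every Galois conjugate of an eigenvalue is an eigenvalue of the same multiplicity), no non-$\delta_0$ eigenvalue can have multiplicity $\geq 2$, so both $\delta_1$ and $\delta_2$ are simple and the main claim applies.

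\noindent The main case is $\delta_1$ simple, so $\delta_2=\delta_{\min}$ has multiplicity $n-2$. Then $\mathcal{D}-\delta_2 I$ is positive semi-definite of rank exactly $2$ (its nonzero eigenvalues $\delta_0-\delta_2,\delta_1-\delta_2$ are both positive), so every $3\times 3$ principal submatrix of $\mathcal{D}-\delta_2 I$ is singular. Setting $r:=-\delta_2$ and letting $a,b,c$ denote the pairwise distances among a triple of vertices, this reads
\[
r^3-r(a^2+b^2+c^2)+2abc=0.
\]
Since $\Gamma$ has three distinct distance eigenvalues, $\Gamma\neq K_n$, so Proposition \ref{prop3}(2) gives $r\geq 2$. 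Substituting a triangle $(a,b,c)=(1,1,1)$ yields $(r-1)^2(r+2)=0$, forcing $r=1$, a contradiction; hence $\Gamma$ is triangle-free. Any connected triangle-free graph on $\geq 3$ vertices contains an induced $P_3$ (some vertex has degree $\geq 2$, and its two neighbours are non-adjacent), so substituting $(a,b,c)=(1,1,2)$ yields $r^3-6r+4=(r-2)(r^2+2r-2)=0$, whose only root $\geq 2$ is $r=2$. Hence $\delta_2=-2$, and Proposition \ref{prop3}(3) gives $\Gamma=K_{a_1,\ldots,a_s}$ with $s=n-m_{\min}=2$, i.e.\ $\Gamma$ is complete bipartite.

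\noindent For the opposite case, $\delta_2$ simple with $\delta_1$ of multiplicity $n-2\geq 2$, my plan is to show it cannot occur. The multiplicity $\geq 2$ forces $\delta_1\in\mathbb{Z}$, and by Proposition \ref{prop3}(5) together with $\Gamma\neq K_n$, $\delta_1\geq 1-\sqrt{3}$, so $\delta_1\geq 0$. Using $\delta_2=-\delta_0-(n-2)\delta_1$ from $\operatorname{tr}\mathcal{D}=0$, a direct expansion yields
\[
\operatorname{tr}(\mathcal{D}^3)=-(n-2)\delta_1\Bigl[\bigl((n-2)^2-1\bigr)\delta_1^2+3(n-2)\delta_0\delta_1+3\delta_0^2\Bigr].
\]
For $n\geq 4$ and $\delta_1\geq 0$ the bracket is strictly positive, so the right-hand side is $\leq 0$, with equality only when $\delta_1=0$. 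But $\operatorname{tr}(\mathcal{D}^3)=\sum_{i,j,k}d(i,j)d(j,k)d(k,i)>0$ for every graph on $\geq 3$ vertices, a contradiction. The main obstacle is really the first case: the key move is translating the multiplicity hypothesis into the vanishing of every $3\times 3$ minor of $\mathcal{D}-\delta_2 I$ and exploiting two carefully chosen triples to pin down $\delta_2=-2$, after which Proposition \ref{prop3}(3) closes the argument.
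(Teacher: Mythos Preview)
Your argument is correct, but it is more elaborate than necessary, and the ``in particular'' clause contains a small misstatement.

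The paper handles both cases at once. Writing $\delta'$ for whichever of $\delta_1,\delta_2$ has multiplicity $n-2$, the matrix $\mathcal{D}-\delta' I$ has rank~$2$ (positive semidefiniteness is irrelevant here: rank~$2$ alone forces every $3\times 3$ minor to vanish). Since any connected non-complete graph already contains an induced $P_3$---take the first three vertices on a shortest path between two non-adjacent vertices---your triangle-freeness detour is unnecessary, and one reads off directly that $\delta'\in\{1\pm\sqrt{3},-2\}$. Integrality of $\delta'$ (from multiplicity $\geq 2$) then forces $\delta'=-2$. If $\delta'=\delta_2$ one finishes via Proposition~\ref{prop3}(3) with $s=n-m_{\min}=2$, exactly as you do; if $\delta'=\delta_1=-2<1-\sqrt{3}$, Proposition~\ref{prop3}(5) gives $\Gamma=K_n$, contradicting the hypothesis of three eigenvalues. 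Thus your separate $\operatorname{tr}(\mathcal{D}^3)$ computation for the case ``$\delta_2$ simple'', while correct and self-contained, can be replaced by the same one-line rank argument used in the other case.

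For the ``in particular'' clause you overclaim: $\delta_0\notin\mathbb{Z}$ only guarantees that some Galois conjugate of $\delta_0$ is a distinct eigenvalue of the same multiplicity~$1$, hence that \emph{at least one} of $\delta_1,\delta_2$ is simple---not both. For example, $K_{1,3}$ has distance spectrum $2+\sqrt{7},\ 2-\sqrt{7},\ [-2]^{2}$. One simple eigenvalue is all you need to invoke the main statement, so your conclusion survives, but the sentence as written is false and should be weakened.
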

\begin{proof}
Let $\mathcal{D}$ be the distance matrix of $\Gamma$.
For $n=3$, the lemma is obviously true, so we consider $n\geq4$. Let $\{\delta,\delta^{\prime}\}=\{\delta_{1},\delta_{2}\}$
and let $\delta$ be a simple distance eigenvalue.
Then $\delta^{\prime}\in\mathbb{Z}$.
As the trace of $\mathcal{D}$ is equal to zero, we find
\begin{equation*}
\delta_{0}+\delta=-\delta^{\prime}(n-2).
\end{equation*}
The rank of $\mathcal{D}-\delta^{\prime}I$ is equal to $2$. Clearly $\Gamma$ is not complete, so consider vertices $x,y,z$ such that
$x\sim y\sim z$ and $d(x,z)=2$. The principle submatrix $P$ of $\mathcal{D}$ with respect to $\{x,y,z\}$ satisfies
$P=\left(
              \begin{array}{ccc}
                0 & 1 & 2 \\
                1 & 0 &  1\\
                2 &  1 &  0\\
              \end{array}
            \right)$
and $P$ has eigenvalues $1\pm\sqrt{3},-2$. As $P-\delta^{\prime}I$ has rank at most $2$, we see that $\delta^{\prime}\in\{1\pm\sqrt{3},-2\}$.
But as $\delta^{\prime}\in\mathbb{Z}$, we see $\delta^{\prime}=-2$. Thus we are done by Prop. \ref{prop3}.
\end{proof}

The following result shows that the graphs with exactly three distinct distance eigenvalues fall into three classes.
\begin{proposition}\label{3-class}
Let $\Gamma$ be an $n$-vertex connected graph with exactly three distinct distance eigenvalues $\delta_{0}>\delta_{1}>\delta_{2}$,
with respective multiplicities $m_{0}=1,m_{1},m_{2}$. Then one of the following holds.
\begin{itemize}
  \item[\emph{(1)}] $\Gamma$ is complete bipartite;
  \item[\emph{(2)}] $n$ is odd and $\delta_{0}=c\big(\frac{n-1}{2}\big)$ with $3\leq c\in\mathbb{Z}$, and $m_{1}=m_{2}\geq2$;
  \item[\emph{(3)}] $\delta_{0},\delta_{1},\delta_{2}\in\mathbb{Z}$, $\delta_{1}\geq0$ and $\delta_{2}\leq-3$.
\end{itemize}
\end{proposition}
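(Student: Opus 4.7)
The plan is to split according to the arithmetic nature of $\delta_1,\delta_2$, exploiting the remark (made just after Lemma \ref{lem:interlacing}) that since the distance characteristic polynomial $p_\Gamma(x)\in\mathbb{Z}[x]$ is monic, the minimal polynomial over $\mathbb{Q}$ of each eigenvalue $\delta$ appears in $p_\Gamma$ to the power $\mathrm{mult}(\delta)$. In particular, every Galois conjugate of an eigenvalue is itself an eigenvalue of the same multiplicity. Since $\delta_0$ is simple by Perron--Frobenius, this places sharp constraints on where irrational eigenvalues can sit.

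First I would rule out the subtle irrational configurations. Suppose $\delta_1\notin\mathbb{Z}$ (the case $\delta_2\notin\mathbb{Z}$ is symmetric). Its Galois conjugate is another eigenvalue of the same multiplicity $m_1$, so it is either $\delta_0$ or $\delta_2$. If the conjugate is $\delta_0$, then $m_1=m_0=1$, so $\delta_1$ is simple; Lemma \ref{com.bip} yields that $\Gamma$ is complete bipartite and we land in case (1). Otherwise the conjugate is $\delta_2$, so the quadratic $(x-\delta_1)(x-\delta_2)$ is the common minimal polynomial; it lies in $\mathbb{Z}[x]$, and $m_1=m_2$. Since $n=1+2m_1$, $n$ is odd and $m_1=m_2=(n-1)/2$; the leftover linear factor of $p_\Gamma$ is $x-\delta_0$, so $\delta_0\in\mathbb{Z}$. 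The trace identity $\mathrm{tr}(\mathcal{D})=0$ yields
\[
\delta_0 = -m_1(\delta_1+\delta_2) = c\cdot\frac{n-1}{2},\qquad c:=-(\delta_1+\delta_2)\in\mathbb{Z}.
\]
To force $c\geq 3$, I would use Proposition \ref{prop3}(4) giving $\delta_0\geq n-1$, hence $c\geq 2$; the borderline $c=2$ is ruled out because Proposition \ref{prop3}(5) would then make $\Gamma$ complete, contradicting the existence of three distinct distance eigenvalues. This gives case (2), with $m_1=m_2\geq 2$ because $m_1=1$ was handled by Lemma \ref{com.bip}.

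If neither of the above configurations occurs, then $\delta_1,\delta_2\in\mathbb{Z}$, and $\delta_0$ is automatically an integer as the root of the remaining linear factor of $p_\Gamma(x)\in\mathbb{Z}[x]$. Proposition \ref{prop3}(5) combined with $\Gamma\neq K_n$ gives $\delta_1\geq 1-\sqrt{3}>-1$, so integrality yields $\delta_1\geq 0$; Proposition \ref{prop3}(1)--(2) gives $\delta_2\leq -2$; and the borderline $\delta_2=-2$ is handled via Proposition \ref{prop3}(3), which identifies $\Gamma$ as complete multipartite, with the complete bipartite subcase being case (1). The residual configurations satisfy $\delta_2\leq -3$, giving case (3).

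The main obstacle is the careful bookkeeping in step two: one must argue that Galois conjugacy of an irrational $\delta_1$ cannot involve any eigenvalue outside $\{\delta_0,\delta_2\}$ and that the two sub-possibilities (conjugate to $\delta_0$ versus conjugate to $\delta_2$) cleanly correspond to cases (1) and (2). A secondary delicate point is ensuring the $\delta_2=-2$ boundary in the all-integer case is absorbed into case (1), so that case (3)'s strict bound $\delta_2\leq -3$ really holds on the remainder.
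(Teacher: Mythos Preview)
Your approach mirrors the paper's closely: both invoke Lemma~\ref{com.bip} when an eigenvalue is simple, both use the trace identity to pin down $\delta_{0}$ in the equal-multiplicity situation, and both appeal to Proposition~\ref{prop3} for the remaining bounds. The paper splits by multiplicities ($m_{1}$ or $m_{2}$ simple; $m_{1}=m_{2}\geq2$; otherwise), whereas you split by rationality of $\delta_{1},\delta_{2}$; the underlying ideas coincide. Your derivation of $\delta_{1}\geq0$ via Proposition~\ref{prop3}(5) is in fact more explicit than the paper's, which at the corresponding point simply writes ``thus we are done by Prop.~\ref{prop3}(1)''. One organizational difference worth noting: the paper routes \emph{every} instance of $m_{1}=m_{2}\geq2$ (integral or not) into case~(2), whereas you only send the irrational conjugate pairs there and push integral equal-multiplicity graphs into your case~(3) analysis.

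There is, however, a genuine gap in your treatment of the boundary $\delta_{2}=-2$. You correctly observe that Proposition~\ref{prop3}(3) forces $\Gamma$ to be complete multipartite, but you then pass to the complete \emph{bipartite} subcase as if that exhausted the possibilities. It does not: the cocktail-party graph $K_{2,2,2}$ has distance spectrum $[6]^{1},\,[0]^{2},\,[-2]^{3}$, so it has exactly three distinct distance eigenvalues with $\delta_{2}=-2$, yet it is not complete bipartite, has $m_{1}=2\neq3=m_{2}$ so fails~(2), and has $\delta_{2}=-2\not\leq-3$ so fails~(3). More generally $K_{a,\ldots,a}$ with $s\geq3$ equal parts of size $a\geq2$ has distance spectrum $[(s{+}1)a-2]^{1},\,[a-2]^{s-1},\,[-2]^{s(a-1)}$ and yields the same obstruction. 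This is not a defect peculiar to your argument: the paper's own proof is equally (indeed more) cursory at this juncture, and the proposition as stated does not appear to accommodate these examples. Your instinct to flag the $\delta_{2}=-2$ border as ``a secondary delicate point'' was well placed.
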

\begin{proof}
If $\delta_{1}$ or $\delta_{2}$ is simple then by Lemma \ref{com.bip}, we find (1).
If $m_{1}=m_{2}\geq2$ then $m_{1}=m_{2}=m=\frac{1}{2}(n-1)$ and hence $n=2m+1$
is an odd integer.
Since the trace of distance matrix is zero, consequently the sum of (distance) eigenvalues is zero, and hence we obtain
\begin{equation}\label{eq1}
\delta_{0}+\frac{1}{2}(n-1)(\delta_{1}+\delta_{2})=0.
\end{equation}
As $0>\delta_{1}+\delta_{2}\in\mathbb{Z}$, we obtain $\delta_{0}=c\frac{1}{2}(n-1)$, where $c\in\mathbb{Z}$,
by Equation (\ref{eq1}). As $\Gamma$ is not a complete graph, we find $c>2$ by Prop. \ref{prp34}(5).
Hence, if $m_{1}=m_{2}\geq2$, then $\Gamma$ falls to (2). So we may assume $m_{1}$, $m_{2}$ both are at least 2 and $m_{1}\neq m_{2}$.
Then the (distance) eigenvalues are integral. Thus we are done by Prop. \ref{prop3}(1).
\end{proof}
For any two vertices $x,y\in V(\Gamma)$, we define the notations $\nu_{ij}(x,y)$ and $k_{i}(x)$, where
$\nu_{ij}(x,y)=\#\{z\in V(\Gamma)\mid d(x,z)=i \hspace{0.2cm}\textrm{and}\hspace{0.2cm} d(y,z)=j\}$ and
$k_{i}(x)=\#\{z\in V(\Gamma)\mid d(x,z)=i\}$. Note that if $y=x$ and $j=i$, then $\nu_{ij}(x,y)=k_{i}(x)$.
The following result is true for a connected graph with three distinct distance eigenvalues.
\begin{proposition}\label{prop1}
Let $\Gamma$ be a connected graph with diameter $D$ and with exactly three distinct distance eigenvalues $\delta_{0}>\delta_{1}>\delta_{2}$.
There exists an eigenvector $\alpha$ of $\mathcal{D}$ corresponding to $\delta_{0}$ such that for any two distinct vertices $x,y\in V(\Gamma)$, we have
\begin{equation*}
\sum\limits_{0\leq i,j\leq D}(i-j)^{2}\nu_{ij}=-2(\delta_{1}+\delta_{2})d(x,y)-2\delta_{1}\delta_{2}+(\alpha_{x}-\alpha_{y})^{2}.
\end{equation*}
\end{proposition}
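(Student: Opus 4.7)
The proof is essentially a direct computation that reads off the $(x,x)$, $(y,y)$, and $(x,y)$ entries of $\mathcal{D}^2$ and repackages them. The plan is to use the minimal polynomial factorization together with Perron--Frobenius to fix the eigenvector $\alpha$, then expand $(i-j)^2$ and recognize each piece as an entry of $\mathcal{D}^2$.

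First I would take $\alpha$ to be the vector appearing in (\ref{eq2}), i.e. the one satisfying
\[
(\mathcal{D}-\delta_{1}I)(\mathcal{D}-\delta_{2}I) = \alpha\alpha^{\top},\qquad \mathcal{D}\alpha=\delta_{0}\alpha,
\]
whose existence follows from the fact that $\delta_0$ is a simple eigenvalue and the product on the left has rank one (this is exactly the content of the discussion preceding (\ref{eq2})). Expanding the left-hand side gives the identity
\[
\mathcal{D}^{2} = (\delta_{1}+\delta_{2})\mathcal{D} - \delta_{1}\delta_{2}I + \alpha\alpha^{\top},
\]
which I will read off at the diagonal entries $(x,x)$, $(y,y)$ and the off-diagonal entry $(x,y)$ (using $x\neq y$, so that $I_{xy}=0$ and $\mathcal{D}_{xy}=d(x,y)$).

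Next I would expand the left-hand side of the claimed identity using $(i-j)^2 = i^2 + j^2 - 2ij$ and the definition of $\nu_{ij}(x,y)$:
\[
\sum_{0\le i,j\le D}(i-j)^{2}\nu_{ij}(x,y) = \sum_{z}d(x,z)^{2} + \sum_{z}d(y,z)^{2} - 2\sum_{z}d(x,z)d(y,z),
\]
and recognize the three sums as $(\mathcal{D}^2)_{xx}$, $(\mathcal{D}^2)_{yy}$, and $2(\mathcal{D}^2)_{xy}$ respectively. Substituting the diagonal and off-diagonal values obtained from the rewritten (\ref{eq2*}) then yields
\[
\alpha_x^2 + \alpha_y^2 - 2\delta_1\delta_2 - 2\alpha_x\alpha_y - 2(\delta_1+\delta_2)d(x,y),
\]
which simplifies to the desired $(\alpha_x-\alpha_y)^{2} - 2\delta_{1}\delta_{2} - 2(\delta_{1}+\delta_{2})d(x,y)$.

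There is no genuine obstacle here; the only subtlety worth flagging is that the $\alpha$ in the statement is not an arbitrary Perron eigenvector but the specific one normalized by (\ref{eq2}) (equivalently, scaled so that $\alpha\alpha^\top = (\mathcal{D}-\delta_1 I)(\mathcal{D}-\delta_2 I)$), and that one must be careful with the sign of the $\delta_1\delta_2 I$ term when rewriting (\ref{eq2*}) from the factored form in (\ref{eq2}). Once that is in place, the rest is purely formal matrix-entry bookkeeping.
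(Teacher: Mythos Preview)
Your proposal is correct and follows essentially the same route as the paper: the paper likewise reads off the $(x,x)$, $(y,y)$ and $(x,y)$ entries of $\mathcal{D}^2$ from (\ref{eq2*}) to obtain three equations (its (\ref{eq3}), (\ref{eq4}), (\ref{eq5})) and then forms (\ref{eq3})$+$(\ref{eq4})$-2\cdot$(\ref{eq5}). Your remark about the sign of the $\delta_1\delta_2 I$ term is apt, since the printed (\ref{eq2*}) has a sign slip relative to the expansion of (\ref{eq2}), but the paper's subsequent equations use the correct sign, as you do.
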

\begin{proof}
For any two distinct vertices $x,y\in V(\Gamma)$ with $d(x,y)=t$ and using Equation (\ref{eq2*}), we obtain the following three equations:
\begin{equation}\label{eq3}
\sum\limits_{i=0}^{D}i^{2}k_{i}(x)=\alpha_{x}^{2}-\delta_{1}\delta_{2},
\end{equation}
\begin{equation}\label{eq4}
\sum\limits_{i=0}^{D}i^{2}k_{i}(y)=\alpha_{y}^{2}-\delta_{1}\delta_{2},
\end{equation}
\begin{equation}\label{eq5}
\sum\limits_{\substack{0\leq i,j\leq D \\ 0\leq\mid i-j\mid\leq t}}ij\nu_{ij}=
\alpha_{x}\alpha_{y}+(\delta_{1}+\delta_{2})t,
\end{equation}
where $t=d(x,y)$ and $\nu_{ij}=\nu_{ij}(x,y)$ as defined above. Now adding Equations (\ref{eq3}) and (\ref{eq4})
and subtracting twice Equation (\ref{eq5}), we obtain the desired result.
\end{proof}

\section{The Main Result}\label{sec4}
In this section, we present our main result. We will prove that the $d$-cube
is determined by its distance spectrum.\\
The distance spectrum of the $d$-cube is given in the following lemma.
\begin{lemma}\emph{(\cite{AA015})}\label{lem:dcube}
Let $d$ be a positive integer. The distance spectrum of the $d$-cube is
given as,
\begin{equation*}
[d2^{d-1}]^{1}, [0]^{(2^{d}-d-1)}, [-2^{d-1}]^{d}
\end{equation*}
\end{lemma}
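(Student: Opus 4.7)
The plan is to diagonalize $\mathcal{D}(Q_d)$ explicitly by exploiting the rich symmetry of the $d$-cube, then read off the three eigenvalues together with their multiplicities. I will use the fact that the Hamming distance splits coordinate-wise: for $x,y\in\{0,1\}^d$,
\[
d(x,y)=\sum_{i=1}^{d}\mathbf{1}[x_{i}\neq y_{i}].
\]
This lets me write $\mathcal{D}(Q_d)=\sum_{i=1}^{d}M_{i}$, where $M_i$ is the $\{0,1\}$-matrix encoding ``disagrees in coordinate $i$''. Identifying $V(Q_d)$ with $\{0,1\}^{d}$ in the natural tensor order, each $M_{i}$ factors as a Kronecker product
\[
M_{i}=J_{2}^{\otimes(i-1)}\otimes(J_{2}-I_{2})\otimes J_{2}^{\otimes(d-i)}.
\]

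Next I would note that the two $2\times 2$ matrices $J_{2}$ and $J_{2}-I_{2}$ are simultaneously diagonalized by the Hadamard basis $v_{+}=(1,1)^{\top}$, $v_{-}=(1,-1)^{\top}$, with eigenvalues $(2,0)$ for $J_{2}$ and $(1,-1)$ for $J_{2}-I_{2}$. Hence the $2^{d}$ tensor vectors $v_{\varepsilon}:=v_{\varepsilon_{1}}\otimes\cdots\otimes v_{\varepsilon_{d}}$, indexed by $\varepsilon\in\{+,-\}^{d}$, form a common eigenbasis of all the $M_{i}$ and therefore of $\mathcal{D}(Q_d)$. (Equivalently, $\mathcal{D}(Q_d)$ commutes with the regular action of $(\mathbb{Z}/2)^{d}$ and so is diagonalized by its characters $\chi_{S}$.)

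Then I would compute the eigenvalue of $M_{i}$ on $v_{\varepsilon}$ directly from the tensor form. The factor from coordinate $j\neq i$ is $0$ whenever $\varepsilon_{j}=-$; hence $M_{i}v_{\varepsilon}=0$ unless $\varepsilon_{j}=+$ for all $j\neq i$, in which case the eigenvalue equals $2^{d-1}$ if $\varepsilon_{i}=+$ and $-2^{d-1}$ if $\varepsilon_{i}=-$. Summing over $i$ gives:
\begin{itemize}
\item if $\varepsilon=(+,\dots,+)$, then $\mathcal{D}(Q_d)v_{\varepsilon}=d\cdot 2^{d-1}\,v_{\varepsilon}$ (one such vector);
\item if exactly one coordinate $\varepsilon_{i_{0}}$ equals $-$, then only $M_{i_{0}}$ contributes, giving eigenvalue $-2^{d-1}$ ($d$ such vectors);
\item if at least two coordinates of $\varepsilon$ equal $-$, then every $M_{i}$ vanishes on $v_{\varepsilon}$, giving eigenvalue $0$ ($2^{d}-d-1$ such vectors).
\end{itemize}
These three cases reproduce exactly the claimed spectrum $[d2^{d-1}]^{1},[0]^{(2^{d}-d-1)},[-2^{d-1}]^{d}$.

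There is essentially no real obstacle here: once the tensor decomposition of each $M_{i}$ is in place and the common eigenbasis identified, the eigenvalue of $\mathcal{D}(Q_d)$ on $v_{\varepsilon}$ is determined by the simple rule ``all but one coordinate must be $+$'', and the multiplicities are immediate combinatorial counts. The only small care needed is bookkeeping the contribution of each $M_{i}$ to make sure that the three cases exhaust $\{+,-\}^{d}$ with the correct counts $1,d,2^{d}-d-1$, summing to $2^{d}$.
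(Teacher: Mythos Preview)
Your argument is correct: writing $\mathcal{D}(Q_d)=\sum_{i=1}^{d}M_i$ with $M_i=J_2^{\otimes(i-1)}\otimes(J_2-I_2)\otimes J_2^{\otimes(d-i)}$ and diagonalizing in the Hadamard tensor basis $v_\varepsilon$ gives exactly the three cases you list, with the right multiplicities. The only slightly delicate step --- that $M_i v_\varepsilon=0$ whenever some $\varepsilon_j=-$ with $j\neq i$ --- follows cleanly from $J_2 v_-=0$, and your case split on the number of minus signs in $\varepsilon$ is exhaustive.

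There is nothing in the paper to compare your proof against: Lemma~\ref{lem:dcube} is stated without proof and simply attributed to \cite{AA015}. Your self-contained tensor-product computation is therefore a genuine addition rather than an alternative to anything the authors wrote. It is essentially the character-theoretic calculation one would expect (the $v_\varepsilon$ are the characters of $(\mathbb{Z}/2\mathbb{Z})^d$, and $\mathcal{D}(Q_d)$ lies in the group algebra), packaged in a way that avoids any explicit representation theory.
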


To show that the $d$-cube is determine by its distance spectrum, we will need some theory
about isometrically embeddable graphs in a hypercube.
We say that a connected graph $\Gamma$ is \emph{isometrically embeddable} in a connected
graph $\Omega$ if there exists a map $\phi:V(\Gamma)\rightarrow V(\Omega)$ such that
$d_{\Gamma}(x,y)=d_{\Omega}(\phi(x),\phi(y))$ for all vertices $x$ and $y$ of $\Gamma$.
We will say that in this case $\Gamma$ is an isometric subgraph
of $\Omega$.\\

The following result shows that for a connected bipartite graph, the property that
the distance matrix has only one positive eigenvalue is very strong. The equivalence
of Items $(1)$ and $(3)$ is due to Roth \& Winkler \cite{RW86}, and the fact that Items $(1)$ and
$(2)$ are equivalent is due to Graham \& Winkler \cite{GW85}. Note that the distance matrix
of an $d$-cube has rank $d+1$, see Lemma \ref{lem:dcube}.
\begin{theorem}\emph{(\cite[Theorem 19.2.8]{DL97}, \cite{GW85} and \cite{RW86})}\label{thm1}
Let $\Gamma$ be a connected
bipartite graph with at least two vertices. Let $m$ be the rank of its distance matrix
$\mathcal{D}$. The following assertions are equivalent.
\begin{itemize}
  \item[\emph{(1)}] $\Gamma$ is isometrically embeddable in a $d$-cube for some positive integer $d$.
  \item[\emph{(2)}] $\Gamma$ is isometrically embeddable in the $(m-1)$-cube.
  \item[\emph{(3)}] The distance matrix of $\Gamma$ has exactly one positive eigenvalue.
\end{itemize}
\end{theorem}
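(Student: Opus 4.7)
The plan is to prove the three implications via the cycle $(2)\Rightarrow(1)\Rightarrow(3)\Rightarrow(2)$. The implication $(2)\Rightarrow(1)$ is immediate, simply by taking $d=m-1$.

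For $(1)\Rightarrow(3)$, suppose $\phi:V(\Gamma)\to V(H(d,2))$ is an isometric embedding. Then $\mathcal{D}(\Gamma)$ is precisely the principal submatrix of $\mathcal{D}(H(d,2))$ indexed by $\phi(V(\Gamma))$. By Lemma \ref{lem:dcube}, $\mathcal{D}(H(d,2))$ has exactly one positive eigenvalue. Lemma \ref{lem:interlacing} then forces $\mathcal{D}(\Gamma)$ to have at most one positive eigenvalue, while Prop.~\ref{prop3}(4) guarantees $\delta_{0}\geq n-1>0$. Thus $\mathcal{D}(\Gamma)$ has exactly one positive eigenvalue.

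The heart of the argument is $(3)\Rightarrow(2)$. I would invoke the Djokovi\'{c}--Winkler relation $\Theta$ on $E(\Gamma)$: edges $e=xy$ and $f=uv$ satisfy $e\,\Theta\,f$ iff $d(x,u)+d(y,v)\neq d(x,v)+d(y,u)$. For a bipartite graph, $\Theta$ is always reflexive and symmetric. By Djokovi\'{c}'s theorem, $\Gamma$ is an isometric subgraph of some hypercube precisely when $\Theta$ is transitive; in that case, each of the $k$ $\Theta$-classes $F_{i}$ disconnects $\Gamma$ into two parts $(A_{i},B_{i})$, and the map $x\mapsto(\mathbb{1}_{A_{1}}(x),\dots,\mathbb{1}_{A_{k}}(x))$ embeds $\Gamma$ isometrically into the $k$-cube, equivalent to the metric decomposition $d=\sum_{i=1}^{k}\delta_{A_{i}}$ into cut semimetrics $\delta_{A}(x,y)=\mathbb{1}[|\{x,y\}\cap A|=1]$.

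The main obstacle is extracting this cut decomposition from the spectral hypothesis. I would argue that $\mathcal{D}$ having exactly one positive eigenvalue is equivalent to $-\mathcal{D}$ being conditionally positive semidefinite on $\mathbf{j}^{\perp}$, whence Schoenberg's theorem yields an isometric embedding of $\sqrt{d}$ into $\mathbb{R}^{m-1}$. For a bipartite integer-valued metric, the four-point inequality imposed by this Euclidean embedding forces transitivity of $\Theta$, and the Euclidean coordinates can be replaced by the $0/1$-coordinates of the cut decomposition, which is the hard step. Finally, a rank count using the fact that each $\Theta$-class contributes an independent direction to the embedding, together with the Perron component, yields $\operatorname{rank}(\mathcal{D})=k+1$, forcing $k=m-1$ and delivering the sharp dimension stated in (2).
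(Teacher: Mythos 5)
Note first that the paper does not prove this theorem at all: it is imported verbatim from the literature (\cite[Theorem 19.2.8]{DL97}, with $(1)\Leftrightarrow(2)$ due to Graham \& Winkler \cite{GW85} and $(1)\Leftrightarrow(3)$ due to Roth \& Winkler \cite{RW86}), so you are in effect re-proving those papers. Your easy directions are fine: $(2)\Rightarrow(1)$ is trivial, and your $(1)\Rightarrow(3)$ via Lemma \ref{lem:dcube} and interlacing (Lemma \ref{lem:interlacing}: $\mu_{2}(\mathcal{D}_{\Gamma})\leq\theta_{2}(\mathcal{D}_{Q_d})=0$, while $\delta_{0}>0$ by Perron--Frobenius or Proposition \ref{prop3}(4)) is a correct and pleasantly concrete argument.

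The genuine gap is in $(3)\Rightarrow(2)$, and it sits exactly where the real content of Roth--Winkler lies. Your opening claim --- that $\mathcal{D}$ having exactly one positive eigenvalue is \emph{equivalent} to $-\mathcal{D}$ being conditionally positive semidefinite on $\mathbf{j}^{\perp}$ (negative type) --- is false as a general statement. Writing the spectral decomposition $\mathcal{D}=\delta_{0}\alpha\alpha^{\top}+\sum_{i\geq2}\lambda_{i}v_{i}v_{i}^{\top}$ with $\lambda_{i}\leq0$, one only gets $x^{\top}\mathcal{D}x\leq0$ for $x\perp\alpha$, where $\alpha$ is the Perron eigenvector; negative type requires this for $x\perp\mathbf{j}$, and the two hyperplanes coincide only when $\alpha$ is parallel to $\mathbf{j}$, i.e., when $\Gamma$ is transmission-regular --- which trees, for instance, are not, yet the theorem covers them. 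Indeed, in the general metric hierarchy ($\ell_{1}\Rightarrow$ hypermetric $\Rightarrow$ negative type $\Rightarrow$ one positive eigenvalue) every implication is strict (see \cite{DL97}); the fact that they all collapse for bipartite graph metrics \emph{is} the Roth--Winkler theorem, so invoking the equivalence as a lemma is circular. Compounding this, the two remaining steps you describe --- that the Euclidean embedding ``forces transitivity of $\Theta$'' (Djokovi\'{c}'s criterion) and that the Euclidean coordinates ``can be replaced by the $0/1$-coordinates of the cut decomposition'' --- are stated rather than proved, and you yourself flag the latter as ``the hard step.'' The same applies to the dimension count $\operatorname{rank}(\mathcal{D})=k+1$ for a partial cube with $k$ $\Theta$-classes, which is Graham--Winkler's contribution and needs an argument in both directions. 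So while your architecture (Djokovi\'{c} relation plus a spectral-to-metric bridge) is the standard one from the cited sources, the proposal proves only the two easy implications; the bridge itself is either unproved or, as formulated, incorrect.
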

We now prove a result for bipartite graphs whose distance matrix has constant row sum
and three distinct distance eigenvalues. This result plays an important role in the proof of our main theorem.
\begin{proposition}\label{prop2}
Let $\Gamma$ be a connected bipartite graph with exactly three distinct distance eigenvalues, say $\delta_{0}>\delta_{1}>\delta_{2}$.
Let $\mathcal{D}_{\Gamma}$ be the distance matrix of $\Gamma$ and $D$ is the diameter and
$\mathcal{D}_{\Gamma}\mathbf{j}=\delta_{0}\mathbf{j}$. Let $\Omega$ be a connected graph
that is distance-cospectral with $\Gamma$. Let $\mathcal{D}_{\Omega}$ be the distance matrix of $\Omega$.
Then $\mathcal{D}_{\Omega}\mathbf{j}=\delta_{0}\mathbf{j}$ and $\Omega$ is bipartite.
\end{proposition}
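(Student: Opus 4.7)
The plan is to apply Proposition~\ref{prop1} to a pair of adjacent vertices, using the row-sum and bipartiteness hypotheses on $\Gamma$ to extract a spectral identity that then forces both conclusions about $\Omega$ simultaneously.

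First I would extract a clean spectral identity from $\Gamma$. Since $\mathbf{j}$ is a $\delta_0$-eigenvector and $\delta_0$ has multiplicity one, the vector $\alpha$ appearing in Equation~(\ref{eq2}) must be a positive scalar multiple of $\mathbf{j}$, so $(\alpha_x-\alpha_y)^2=0$ for all $x,y\in V(\Gamma)$. The left-hand side of Proposition~\ref{prop1} can be rewritten as $\sum_z(d(x,z)-d(y,z))^2$. For an edge $\{x,y\}$, the triangle inequality forces $|d(x,z)-d(y,z)|\leq 1$ for every $z$, and bipartiteness of $\Gamma$ upgrades this to $|d(x,z)-d(y,z)|=1$ for every $z$, so the sum equals $n$. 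Applying Proposition~\ref{prop1} in $\Gamma$ to any edge then yields the identity
\begin{equation*}
n = -2(\delta_1+\delta_2) - 2\delta_1\delta_2.
\end{equation*}

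Next I would transport this identity to $\Omega$. Being distance-cospectral with $\Gamma$, the graph $\Omega$ has the same three distinct distance eigenvalues with the same multiplicities, so Proposition~\ref{prop1} supplies an eigenvector $\alpha_\Omega$ of $\mathcal{D}_\Omega$ for $\delta_0$. For any edge $\{x,y\}$ of $\Omega$, the proposition combined with the identity above gives
\begin{equation*}
\sum_z (d(x,z)-d(y,z))^2 = n + (\alpha_{\Omega,x}-\alpha_{\Omega,y})^2,
\end{equation*}
while the triangle inequality bounds the left-hand side by $n$. Hence equality must hold throughout: $\alpha_{\Omega,x}=\alpha_{\Omega,y}$, and $(d(x,z)-d(y,z))^2=1$ for every vertex $z$.

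Finally I would read off both conclusions. The equality $\alpha_{\Omega,x}=\alpha_{\Omega,y}$ along every edge, together with connectedness of $\Omega$, forces $\alpha_\Omega$ to be a constant multiple of $\mathbf{j}$, and therefore $\mathcal{D}_\Omega\mathbf{j}=\delta_0\mathbf{j}$. The condition $d(x,z)\neq d(y,z)$ for every edge $\{x,y\}$ and every vertex $z$ shows that partitioning $V(\Omega)$ by the parity of distance to any fixed base vertex is a proper $2$-colouring, so $\Omega$ is bipartite. I expect no serious obstacle here beyond the initial insight of testing Proposition~\ref{prop1} on an adjacent pair and using bipartiteness of $\Gamma$ to evaluate that specific sum exactly; once the identity $n=-2(\delta_1+\delta_2)-2\delta_1\delta_2$ is in hand, both claims fall out from the triangle inequality.
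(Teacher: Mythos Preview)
Your proposal is correct and follows essentially the same route as the paper: apply Proposition~\ref{prop1} to an adjacent pair in $\Gamma$, use bipartiteness and the constant-row-sum hypothesis to obtain the identity $n=-2(\delta_1+\delta_2)-2\delta_1\delta_2$, then apply Proposition~\ref{prop1} to an edge in $\Omega$ and compare with the trivial bound $\sum_z(d(x,z)-d(y,z))^2\le n$ to force both conclusions. Your rewriting of the left-hand side as $\sum_z(d(x,z)-d(y,z))^2$ is a tidy way to phrase what the paper expresses via the $\nu_{ij}$ notation, but the argument is otherwise identical.
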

\begin{proof}
Let $x,y$ be adjacent vertices in $\Gamma$. Let $\alpha$ be such that
\begin{equation*}\label{eqI}
(\mathcal{D}_{\Gamma}-\delta_{1}I)(\mathcal{D}_{\Gamma}-\delta_{2}I)=\mathbf{\alpha}\mathbf{\alpha}^{\top},
\hspace{0.5cm}\textrm{with}\hspace{0.3cm}\mathcal{D}_{\Gamma}\mathbf{\alpha}=\delta_{0}\mathbf{\alpha}.
\end{equation*}
Then, as $\Gamma$ is bipartite, and by Prop. \ref{prop1}, we have
\begin{eqnarray*}
n=\sum\limits_{i=0}^{D-1}\nu_{i,i+1}^{\Gamma}(x,y)+\nu_{i+1,i}^{\Gamma}(x,y)&=&\sum\limits_{0\leq i,j\leq D}(i-j)^{2}\nu_{ij}^{\Gamma}(x,y)\\
&=&-2(\delta_{1}+\delta_{2}+\delta_{1}\delta_{2})+(\alpha_{x}-\alpha_{y})^{2},
\end{eqnarray*}
where $\nu_{ij}^{\Gamma}(x,y)$ is $\nu_{ij}(x,y)$ in the graph $\Gamma$ as defined above.

As $\mathcal{D}_{\Gamma}\mathbf{j}=\delta_{0}\mathbf{j}$ holds, we have $\alpha_{x}=\alpha_{y}$ for all vertices $x$ and $y$ of $\Gamma$,
and therefore,
\begin{equation}\label{eqII}
-2(\delta_{1}+\delta_{2}+\delta_{1}\delta_{2})=n.
\end{equation}
Let $\tilde{x},\tilde{y}$ be adjacent vertices in $\Omega$.
Let $\mathbf{\tilde{\alpha}}$ be such that
\begin{equation*}
(\mathcal{D}_{\Omega}-\delta_{1}I)(\mathcal{D}_{\Omega}-\delta_{2}I)=\mathbf{\tilde{\alpha}}\mathbf{\tilde{\alpha}}^{\top},
\hspace{0.5cm}\textrm{with}\hspace{0.3cm}\mathcal{D}_{\Omega}\mathbf{\tilde{\alpha}}=\delta_{0}\mathbf{\tilde{\alpha}}.
\end{equation*}
Then, by Prop. \ref{prop1}, we find
\begin{eqnarray}\label{eqIII}
\sum\limits_{i=0}^{D-1}\nu_{i,i+1}^{\Omega}(x,y)+\nu_{i+1,i}^{\Omega}(x,y)&=&\sum\limits_{0\leq i,j\leq D}(i-j)^{2}\nu_{ij}^{\Omega}(\tilde{x},\tilde{y})\\
\nonumber &=&-2(\delta_{1}+\delta_{2}+\delta_{1}\delta_{2})+(\mathbf{\tilde{\alpha}}_{\tilde{x}}-\mathbf{\tilde{\alpha}}_{\tilde{y}})^{2}.
\end{eqnarray}
Thus, by Equations (\ref{eqII}) and (\ref{eqIII}), we see that
\begin{equation*}
\sum\limits_{i=0}^{D-1}\nu_{i,i+1}^{\Omega}(\tilde{x},\tilde{y})+\nu_{i+1,i}^{\Omega}(\tilde{x},\tilde{y})\geq n,
\end{equation*}
holds. As $\sum\limits_{i=0}^{D-1}\nu_{i,i+1}^{\Omega}(\tilde{x},\tilde{y})+\nu_{i+1,i}^{\Omega}(\tilde{x},\tilde{y})\leq n$ clearly holds, so we find
\begin{equation*}
\sum\limits_{i=0}^{D-1}\nu_{i,i+1}^{\Omega}(\tilde{x},\tilde{y})+\nu_{i+1,i}^{\Omega}(\tilde{x},\tilde{y})= n,\hspace{0.5cm}\textrm{and}\hspace{0.3cm}\mathbf{\tilde{\alpha}}_{\tilde{x}}=\mathbf{\tilde{\alpha}}_{\tilde{y}}.
\end{equation*}
The first equation implies that $\Omega$ is bipartite. As $\Omega$ is connected, we obtain $\mathbf{\tilde{\alpha}}_{\tilde{x}}=\mathbf{\tilde{\alpha}}_{\tilde{z}}$
for all vertices $\tilde{x}$ and $\tilde{z}$ of $\Omega$, and hence $\mathcal{D}_{\Omega}\mathbf{j}=\delta_{0}\mathbf{j}$.
\end{proof}
Now we present our main theorem of this paper.
\begin{theorem}\label{main-thm}
Let $\Gamma=Q_{d}$. Then $\Gamma$ is determined by the spectrum of its distance matrix.
\end{theorem}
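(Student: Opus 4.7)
The plan is to exploit the three tools already set up in the excerpt: Lemma~\ref{lem:dcube} which pins down the distance spectrum of $Q_d$, Proposition~\ref{prop2} which forces any distance-cospectral mate to be bipartite with the right row sum, and Theorem~\ref{thm1} which then produces an isometric embedding into a cube. The combinatorics collapses almost immediately once all three are available.

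First I would set up the invariants. Since $Q_d$ is distance-regular, its distance matrix has constant row sum $d\,2^{d-1}$, so $\mathcal{D}_{Q_d}\mathbf{j}=\delta_{0}\mathbf{j}$ with $\delta_0=d\,2^{d-1}$. By Lemma~\ref{lem:dcube}, $Q_d$ has exactly three distinct distance eigenvalues $\delta_0>\delta_1=0>\delta_2=-2^{d-1}$, and the rank of $\mathcal{D}_{Q_d}$ equals $d+1$ (one eigenvalue $\delta_0$ and $d$ copies of $\delta_2$). Of course $Q_d$ is bipartite. These are precisely the hypotheses required by Proposition~\ref{prop2}.

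Now let $\Omega$ be any connected graph distance-cospectral with $Q_d$. Then $\Omega$ also has exactly three distinct distance eigenvalues $\delta_0>0>\delta_2$ and $|V(\Omega)|=2^d$, and its distance matrix has the same rank $d+1$. Applying Proposition~\ref{prop2} with $\Gamma=Q_d$ yields that $\Omega$ is bipartite and $\mathcal{D}_{\Omega}\mathbf{j}=\delta_{0}\mathbf{j}$. In particular $\mathcal{D}_{\Omega}$ has exactly one positive eigenvalue, so Theorem~\ref{thm1} applies and gives an isometric embedding $\phi\colon V(\Omega)\to V(Q_{d})$ into the $((d+1)-1)$-cube, namely into $Q_d$ itself.

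To finish, I would observe that an isometric embedding is automatically injective (distinct vertices of $\Omega$ are at positive distance, hence have distinct images), and since $|V(\Omega)|=2^{d}=|V(Q_{d})|$ the map $\phi$ is a bijection. A distance-preserving bijection sends pairs at distance $1$ to pairs at distance $1$ in both directions, so $\phi$ preserves edges and non-edges; hence $\phi$ is a graph isomorphism and $\Omega\cong Q_{d}$. The only subtle step in this plan is the invocation of Proposition~\ref{prop2}, which requires verifying its hypotheses for $Q_d$ and then transferring the bipartiteness and row-sum property to $\Omega$; everything else is bookkeeping on the spectrum and a rank count. There is no genuine obstacle beyond assembling these ingredients in the right order.
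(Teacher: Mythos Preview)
Your proof is correct and follows exactly the same route as the paper: use Lemma~\ref{lem:dcube} and the constant row sum of $\mathcal{D}_{Q_d}$ to feed Proposition~\ref{prop2}, conclude that any cospectral mate $\Omega$ is bipartite, then apply Theorem~\ref{thm1} via the rank count to get an isometric embedding into $Q_d$, and finish by the vertex count. Your final paragraph spells out slightly more carefully why an isometric embedding onto all of $V(Q_d)$ is an isomorphism, but this is just an elaboration of the paper's last sentence.
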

\begin{proof}
Observe that the distance spectrum of $\Gamma$ is $[d\times2^{d-1}]^{1}, [0]^{(2^{d}-d-1)}, [-2^{d-1}]^{d}$, see Lemma \ref{lem:dcube}.
Let $\mathcal{D}_{\Gamma}$ be the distance spectrum of $\Gamma$. Observe that $\mathcal{D}_{\Gamma}\mathbf{j}=d2^{d-1}\mathbf{j}$.\\
Let $\Omega$ be a connected graph with distance matrix $\mathcal{D}_{\Omega}$ and distance spectrum\\
$[d\times2^{d-1}]^{1}, [0]^{(2^{d}-d-1)}, [-2^{d-1}]^{d}$. Then by Prop. \ref{prop2}, $\Omega$ is bipartite, as
$\Gamma$ is bipartite, and $\mathcal{D}_{\Gamma}\mathbf{j}=d2^{d-1}\mathbf{j}$. The matrix $\mathcal{D}_{\Omega}$
has one positive eigenvalue and the rank of $\mathcal{D}_{\Omega}$ is equal to $d+1$.
By Theorem \ref{thm1}, we obtain that $\Omega$ must be isometrically embeddable in $\Gamma$.
But, as $\Omega$ has the same number of vertices as $\Gamma$, the graph $\Omega$ is isomorphic to $\Gamma$.
\end{proof}
\section{Open Problems}\label{sec5}
We finish this paper with some open problems.
\begin{problem}\label{prob1}
Determine the connected graphs with three distinct distance eigenvalues $\delta_{0}>\delta_{1}>\delta_{2}$
such that $\delta_{1}=0$. Examples are the Johnson graphs, Hamming graphs and the halved cubes. When the class
of graphs is restricted to distance-regular graphs, then this has been solved by Aalipour et al. \cite{AA015}
using the main result of Koolen \& Shpectorov \cite{KS94}.
\end{problem}

\begin{problem}\label{prob2}
Determine the connected graphs with three distinct distance eigenvalues $\delta_{0}>\delta_{1}>\delta_{2}$
such that $\delta_{2}=-3$. Examples are the strongly-regular graphs with second largest adjacency eigenvalue $1$.
\end{problem}

\begin{problem}\label{prob3}
Find a pair of distance cospectral graphs $\Gamma$ and $\Omega$ with respective distance matrices $\mathcal{D}_{\Gamma}$ and $\mathcal{D}_{\Omega}$ such that $\mathcal{D}_{\Gamma}\mathbf{j}=\delta_{0}\mathbf{j}$ and $\mathcal{D}_{\Omega}\mathbf{j}\neq\delta_{0}\mathbf{j}$, if they do exist.
\end{problem}

\begin{problem}\label{prob4}
Does the distance spectrum of a graph determine whether a graph is bipartite?
\end{problem}

\begin{problem}\label{prob5}
Does there exist a non-distance-regular graph with the same distance spectrum as a distance-regular graph?
There are many non-distance-regular graphs with the same adjacency spectrum as a distance-regular graph,
see Van Dam et al. \cite{VHKS06}.
\end{problem}

As a special case of Problem \ref{prob5}, we would like to propose the following problem:
\begin{problem}\label{prob6}
Are the Hamming graphs $H(d,q)$ determined  by their distance spectrum if $q\neq4$, (as for $q=4$, we have the Doob graphs).
Note that, some work has been done for the Hamming graph with respect to the adjacency spectrum by Bang et al. \cite{BVK008}.
\end{problem}
\vspace{0.4cm}
\noindent\textbf{Acknowledgements:}\\

We would like to thank the referee for his/her comments which improved the paper significantly.

\end{document}